\numberwithin{equation}{section}
\patchcmd{\ttlh@hang}{\parindent\z@}{\parindent\z@\leavevmode}{}{}
\patchcmd{\ttlh@hang}{\noindent}{}{}{}
\theoremstyle{plain}
\newtheorem{theorem}{Theorem}[section]
\newtheorem{lemma}[theorem]{Lemma}
\newtheorem{proposition}[theorem]{Proposition}
\theoremstyle{definition}
\theoremstyle{remark}
\newtheorem{remark}[theorem]{Remark}
\def\XXint#1#2#3{{\setbox0=\hbox{$#1{#2#3}{\int}$ }
\vcenter{\hbox{$#2#3$ }}\kern-.6\wd0}}
\DeclareMathOperator{\ind}{ind}
\DeclareMathOperator{\capad}{Ad}
\DeclareMathOperator{\pker}{pker}
\DeclareMathOperator{\covol}{covol}
\DeclareMathOperator{\conv}{conv}
\newcommand{\PH}{\mathrm{P}(\mathcal{H}_{\pi})}
\newcommand{\PHs}{\mathrm{P}(\mathcal{H}_{\pi}^{\infty})}
\newcommand{\Hpi}{\mathcal{H}_{\pi}}
\newcommand{\Hs}{\mathcal{H}_{\pi}^{\infty}}
\title{Completeness of coherent state subsystems for nilpotent Lie groups}
\author{Jordy Timo van Velthoven}
\address{Delft University of Technology,
Mekelweg 4, Building 36,
2628 CD Delft, The Netherlands.}
\email{j.t.vanvelthoven@tudelft.nl}
\subjclass[2020]{22E27, 42C30, 42C40, 81R30}
\keywords{Coherent states, Cyclic vector, Lattice, Nilpotent Lie group, Projective kernel}
\begin{document}

\maketitle

\begin{abstract}
Let $G$ be a nilpotent Lie group and let $\pi$ be a coherent state representation of $G$.
The interplay between the cyclicity of the restriction $\pi|_{\Gamma}$ to a lattice $\Gamma \leq G$ and the completeness of subsystems of coherent states based on a homogeneous $G$-space is considered.
In particular, it is shown that necessary density conditions for Perelomov's completeness problem can be obtained
via density conditions for the cyclicity of $\pi|_{\Gamma}$.
\end{abstract}

\section{Introduction}
Let $G$ be a connected unimodular Lie group and let $(\pi, \Hpi)$ be an irreducible unitary representation of $G$. For a unit vector $\eta \in \Hpi$, consider its orbit under the action $\pi$ on $\Hpi$,
\begin{align} \label{eq:coherent_system}
\pi (G) \eta = \big\{ \pi (g) \eta : g \in G \big\}.
\end{align}
As $\pi$ is irreducible,  $\pi(G) \eta$ is complete in $\Hpi$. Two elements $\pi (g_1) \eta$ and $\pi(g_2) \eta$ differ from one another up to a phase factor, i.e. determine the same state or ray, only if $\pi (g_2^{-1} g_1) \eta \in \mathbb{C}  \eta$.

Let $H \leq G$ be a closed subgroup that stabilises the state defined by $\eta \in \Hpi$, i.e.
\begin{align} \label{eq:stabiliser}
\pi(h) \eta = \chi(h) \eta, \quad h \in H,
\end{align}
where $\chi : H \to \mathbb{T}$ is a unitary character of $H$. Denote by $X = G/H$ the associated homogeneous $G$-space
and let $\sigma : X \to G$ be a cross-section for the canonical projection $p : G \to X$. Then the system of coherent vectors
\begin{align} \label{eq:coherent_state}
\{ \eta_x \}_{x \in X} = \{ \pi (\sigma(x)) \eta \}_{x \in X},
\end{align}
determine a \emph{$\pi$-system of coherent states based on $X$}, in the sense of \cite{perelomov1972coherent, rawnsley1977coherent}.

It will be assumed that $X = G/H$ is unimodular, i.e. $X$ admits a $G$-invariant positive Radon measure $\mu_X$, and that
$\eta$ is \emph{admissible}, that is,
\begin{align} \label{eq:admissible_intro}
\int_X | \langle \eta, \eta_x \rangle |^2 \; d\mu_X (x) < \infty.
\end{align}
Then there exists an admissibility constant $d_{\pi, \eta} > 0$ such that
\begin{align} \label{eq:resolutionID}
 \int_X | \langle f , \eta_x \rangle |^2 \; d\mu_X (x) = d_{\pi, \eta}^{-1} \| f \|^2_{\Hpi}, \quad \text{for all} \quad f \in \Hpi.
\end{align}
The identity \eqref{eq:resolutionID} implies, in particular, that the system \eqref{eq:coherent_state} is overcomplete, i.e.  the system $\{\eta_x \}_{x \in X}$ contains proper subsystems which are complete in $\Hpi$.

For an irreducible representation $(\pi, \Hpi)$ of $G$ that is square-integrable modulo the center $Z = Z(G)$ (resp. the kernel $K = \ker(\pi)$), any vector $\eta \in \Hpi $ satisfies \eqref{eq:stabiliser} and \eqref{eq:admissible_intro} for $H = Z$ (resp. $H = K$). Another common choice \cite{rawnsley1977coherent, perelomov1986generalized, kostant1982symplectic, Odzijewicz1992coherent} for the index space $X = G / H$ is a symplectic $G$-space or a homogeneous K\"ahler manifold that arises as a phase space in geometric quantization \cite{woodhouse1992geometric}. Subgroups $H \leq G$ defining such a phase space do not need to satisfy \eqref{eq:stabiliser} for all $\eta \in \Hpi$ and might not be contained in the isotropy group of a chosen $\eta$.

In \cite{perelomov1972coherent, perelomov1986generalized}, a particular focus is on coherent states for which the stabilising subgroup
$H \leq G$ is assumed to be maximal with the property \eqref{eq:stabiliser}, that is, $H = G_{[\eta]}$, where
\begin{align} \label{eq:projective_stabiliser}
G_{[\eta]} := \big\{ g \in G : \pi (g) \eta \in \mathbb{C} \eta \big\}
\end{align}
 is the stabiliser of $\eta$ for the $G$-action in the projective Hilbert space $\PH$.
 The associated coherent states are so-called \emph{Perelomov-type coherent states}; see Section~\ref{sec:perelomov}.

 Perelomov's completeness problem \cite{perelomov1972coherent, perelomov1986generalized} concerns the completeness of subsystems arising from discrete subgroups $\Gamma \leq G$ for which the volume of $\Gamma \backslash X$ is finite.
More explicitly, subsystems parametrised by an orbit $\Gamma' := \Gamma \cdot o$ of the base point $o := e H\in X$,
\begin{align} \label{eq:lattice_orbits}
\{ \eta_{\gamma'} \}_{\gamma' \in \Gamma'} = \{ \pi (\sigma(\gamma')) \eta \}_{\gamma' \in \Gamma'}.
\end{align}
Criteria for the completeness of subsystems  \eqref{eq:lattice_orbits} involving the volume of the coset space $\Gamma \backslash X$ and the admissibility constant $d_{\pi, \eta} > 0$
were posed as a problem in \cite[p.226]{perelomov1972coherent} and \cite[p.44]{perelomov1986generalized}. Note that if $H = G_{[\eta]}$, then $X = G/ G_{[\eta]}$ depends on $\eta$, and so does the volume of $\Gamma \backslash G / G_{[\eta]}$.

The classical example of coherent states arises from the Heisenberg group $G = \mathbb{H}^1$ and a Schr\"odinger representation $(\pi, L^2 (\mathbb{R}))$ of $\mathbb{H}^1$. For any $\eta \in L^2 (\mathbb{R}) \setminus \{0\}$, the stabiliser $G_{[\eta]}$ defined in \eqref{eq:projective_stabiliser} coincides with the centre $Z(\mathbb{H}^1)$ of $\mathbb{H}^1$, and $X = G/ G_{[\eta]} \cong \mathbb{R}^2$. Therefore, the coherent state system \eqref{eq:coherent_state} is parametrised by the classical phase space $\mathbb{R}^2$ and the subsystem \eqref{eq:lattice_orbits} associated to $\Gamma \subset \mathbb{H}^1$ is parametrised by a lattice $\Gamma' \subset \mathbb{R}^2$. If  $\pi$ is treated as a projective representation $\rho$ of $G / G_{[\eta]} \cong \mathbb{R}^2$, then the coherent vectors \eqref{eq:coherent_state} and the subsystem \eqref{eq:lattice_orbits} arise as orbits of $\mathbb{R}^2$ and $\Gamma'$, respectively. In particular, a subsystem $\{ \pi (\sigma(\gamma')) \eta \}_{\gamma' \in \Gamma'}$ is complete in $L^2 (\mathbb{R})$ if, and only if, $\eta$ is a cyclic vector for $\rho|_{\Gamma'}$, i.e. the linear span of $\rho (\Gamma') \eta$ is dense in $L^2 (\mathbb{R})$. Perelomov's completeness problem for the Heisenberg group is therefore equivalent to determining whether a vector is cyclic for the restriction $\rho |_{\Gamma'}$. If $\eta$ is the Gaussian, the cyclicity of $\eta$ has been completely characterised in \cite{perelomov1971remark, bargmann1971on} (see also \cite{neretin2006perelomov}) in terms of the co-volume or density of the lattice.  The necessity of these density conditions have been shown to hold for arbitrary vectors and in arbitrary dimensions \cite{ramanathan1995incompleteness}, but a density condition alone is not sufficient for describing the cyclicity of the Gaussian in higher-dimensions \cite{pfander2013remarks, grochenig2011multivariate}.
The criteria \cite{perelomov1971remark, bargmann1971on,ramanathan1995incompleteness} coincide with the density conditions characterising the cyclicity of the restricted projective representations as obtainted in, e.g. \cite{rieffel1981von, bekka2004square}.

In other settings than the Heisenberg group, the stabilisers $G_{[\eta]}$ defined in \eqref{eq:projective_stabiliser} do not need to be normal subgroups and could depend crucially on the vector $\eta \in \Hpi \setminus \{0\}$.
For example, this occurs for the holomorphic discrete series  $\pi$ of $G = \mathrm{PSL}(2, \mathbb{R})$, where $G_{[\eta]} = \mathrm{PSO}(2)$ for a class of rotation-invariant vectors $\eta$. Hence, the coherent vectors \eqref{eq:coherent_state} do not arise as orbits of a (projective) representation of $G / G_{[\eta]}$ and the subsystems \eqref{eq:lattice_orbits} are not parametrised by an associated discrete subgroup.
Perelomov's  problem for the highest weight vector has been studied for this setting in \cite{perelomov1973coherent, kellylyth1999uniform, jones2020bergman}, and the criteria for the cyclicity of $\pi|_{\Gamma}$ are quite different
from the completeness of coherent state subsystems; see \cite[Section 9.1]{romero2020density} for an overview.

Of particular interest are representations and vectors that support a system of coherent states based on an index manifold $X = G / H$ with additional properties, such as a symplectic \cite{moscovici1978coherent, moscovici1977coherent} or complex structure \cite{lisiecki1990kaehler, neeb1996coherent}.
For nilpotent Lie groups, another common choice (cf.~\cite[Section 10]{perelomov1986generalized}) is the manifold $X$ to be the corresponding coadjoint orbit $\mathcal{O}_{\pi}$ of the representation $\pi$, which forms the classical phase space, like in the special case of the Heisenberg group.

The purpose of this note is to combine characterisations of coherent state representations \cite{moscovici1977coherent, lisiecki1990kaehler, neeb1996coherent} and criteria for the cyclicity of restricted representations \cite{romero2020density, bekka2004square} to obtain necessary density conditions for (variants of) Perelomov's completeness problem on nilpotent Lie groups.

The first result on the completeness of subsystems concerns $\pi$-systems of coherent states
based on the coadjoint orbit $\mathcal{O}_{\pi}$. (cf. Section \ref{sec:prelim} for the precise definitions.)

\begin{theorem} \label{thm:intro}
Let $G$ be a connected, simply connected nilpotent Lie group and let $\Gamma \leq G$ be a discrete, co-compact subgroup. Suppose $(\pi, \Hpi)$ is an irreducible representation of $G$ that admits an admissible vector $\eta \in \Hpi \setminus \{0\}$ defining a $\pi$-system of coherent states based on a homogeneous $G$-space $X = G/H \cong \mathcal{O}_{\pi}$, with admissibility constant $d_{\pi, \eta} > 0$. Then
\begin{enumerate}[(i)]
\item $H = \big\{ g \in G : \pi (g) \in \mathbb{C} \cdot I_{\Hpi} \big\}$;
\item If  $\{ \pi (\sigma (\gamma')) \eta \}_{\gamma' \in \Gamma \cdot o}$ is complete in $\Hpi$, then $\covol(p(\Gamma)) d_{\pi, \eta} \leq 1$.
\end{enumerate}
(The value $\covol(p(\Gamma)) d_{\pi, \eta}$ is independent of the normalisation of $G$-invariant measure on $X$.)
\end{theorem}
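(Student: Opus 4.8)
The plan is to derive (i) from Kirillov theory and the way the coadjoint stabiliser acts in a polarised model, and then to obtain (ii) by descending to the projective representation of $\bar G:=G/\pker(\pi)$ and invoking the von Neumann coupling-constant formalism of \cite{bekka2004square, romero2020density}.

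For (i), fix $\ell\in\mathcal O_\pi$ and write $\frakg_\ell=\{X\in\frakg:\ell([X,\frakg])=0\}$ for the radical of the Kirillov form, so that $\mathcal O_\pi\cong G/G_\ell$ with $G_\ell=\exp\frakg_\ell$ the coadjoint stabiliser. First I would record the general inclusion $\pker(\pi)\subseteq G_\ell$: the condition $[\log g,\frakg]\subseteq\ker d\pi$ defining $\pker(\pi)$, together with $\ker d\pi\subseteq\ker\ell$, gives $\ell([\log g,\frakg])=0$, i.e. $\log g\in\frakg_\ell$. Next, the $G$-space isomorphism $X=G/H\cong G/G_\ell$ renders $H$ conjugate to $G_\ell$, so after replacing $\eta$ by a suitable $\pi(g_0)\eta$ I may assume $H=G_\ell$. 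Passing to a Schrödinger model $\Hpi\cong L^2(\mathbb R^k)$ attached to a polarisation $\mathfrak p\supseteq\frakg_\ell$, every $Y\in\frakg_\ell\subseteq\mathfrak p$ acts as $\pi(\exp Y)=$ multiplication by a unimodular real-analytic phase $e^{i\phi_Y}$, which admits an eigenvector in $L^2(\mathbb R^k)$ only when $\phi_Y$ is constant, that is, only when $\exp Y\in\pker(\pi)$. Since $H=G_\ell\subseteq G_{[\eta]}$ by \eqref{eq:stabiliser}, each such $\pi(\exp Y)$ fixes the ray of $\eta$, forcing $G_\ell\subseteq\pker(\pi)$; together with the reverse inclusion this yields $H=G_\ell=\pker(\pi)$, as claimed.

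Granting (i), $\pker(\pi)$ acts by scalars and $\pi$ descends to a square-integrable projective unitary representation $\bar\pi$ of $\bar G=G/\pker(\pi)\cong X$, for which \eqref{eq:resolutionID} is precisely the orthogonality relation and identifies the formal degree $d_{\bar\pi}=d_{\pi,\eta}$. The subsystem \eqref{eq:lattice_orbits} then reads $\{\bar\pi(\bar\gamma)\eta\}_{\bar\gamma\in\Lambda}$ with $\Lambda:=p(\Gamma)\leq\bar G$, and its completeness in $\Hpi$ is exactly the cyclicity of $\eta$ for $\bar\pi|_\Lambda$, equivalently for the twisted group von Neumann algebra $\mathcal M:=\bar\pi(\Lambda)''$. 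I would then verify that $\Lambda$ is a lattice in $\bar G$: cocompactness is automatic since $\bar G/\Lambda=G/(\Gamma\,\pker(\pi))$ is a continuous image of the compact space $\Gamma\backslash G$, while discreteness amounts to $\Gamma\cap\pker(\pi)$ being cocompact in $\pker(\pi)$. The coupling-constant formula of \cite{bekka2004square, romero2020density} then gives
\begin{align*}
\dim_{\mathcal M}\Hpi=\covol(\Lambda)\,d_{\bar\pi}=\covol(p(\Gamma))\,d_{\pi,\eta},
\end{align*}
and the existence of a cyclic vector for $\mathcal M$ forces $\dim_{\mathcal M}\Hpi\leq 1$, whence $\covol(p(\Gamma))\,d_{\pi,\eta}\leq 1$. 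Normalisation independence is immediate, since scaling $\mu_X$ by $c>0$ multiplies $\covol(p(\Gamma))$ by $c$ and, through \eqref{eq:resolutionID}, multiplies $d_{\pi,\eta}$ by $c^{-1}$.

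The main obstacle is the analytic core of (ii): establishing the coupling-constant identity $\dim_{\mathcal M}\Hpi=\covol(\Lambda)\,d_{\bar\pi}$ for the genuinely projective $\bar\pi$ and matching $d_{\bar\pi}$ with the coherent-state constant $d_{\pi,\eta}$ via \eqref{eq:resolutionID}, which requires the square-integrability of $\bar\pi$ and the discreteness of $\Lambda$ to be used in tandem. Discreteness hinges on $\Gamma\cap\pker(\pi)$ being cocompact in $\pker(\pi)$—the rationality of the projective kernel with respect to $\Gamma$—which is the delicate point where cocompactness of $\Gamma$ in $G$ must be exploited and is what guarantees that $\covol(p(\Gamma))$ is finite and positive. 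In (i), the only subtle step is the inclusion $G_\ell\subseteq\pker(\pi)$, resting on the continuous-spectrum property of the multiplication operators $\pi(\exp Y)$, $Y\in\frakg_\ell$.
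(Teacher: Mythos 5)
Your treatment of (i) departs from the paper's and has a genuine gap. The paper deduces (i) by first noting that $G/H\cong\mathcal O_\pi$ forces $H$ to be connected (simple connectedness of the orbit), and then invoking Moscovici's structural results: admissibility of $\eta$ makes $\pi$ a subrepresentation of $\ind_H^G\chi$, which for connected $H$ forces $H=G(\ell)$ for \emph{every} $\ell\in\mathcal O_\pi$, whence $H=\bigcap_{\ell\in\mathcal O_\pi}G(\ell)=\pker(\pi)$ by Bekka--Ludwig. You instead argue in a Schr\"odinger model that each $Y\in\frakg(\ell)$ acts as multiplication by a unimodular analytic phase, which can fix a ray only if constant. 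The unjustified step is precisely this multiplication claim: for $p_0=\exp Y\in P=\exp(\mathfrak p)$, the operator $\pi(p_0)$ on $L^2(G/P)$ is a \emph{weighted translation}, $(\pi(p_0)f)(\sigma(x))=\chi_\ell(p')^{-1}f(\sigma(p_0^{-1}\cdot x))$, and it reduces to a pure multiplication operator only when $p_0$ acts trivially on $G/P$, i.e.\ when $p_0\in\bigcap_g gPg^{-1}$. Thus you need $\frakg(\ell)$ to lie in the largest ideal contained in $\mathfrak p$, which does not follow from the automatic inclusion $\frakg(\ell)\subseteq\mathfrak p$ (polarisations need not be ideals, and $\frakg(\ell)$ need not be $\capad$-invariant when the orbit is not flat). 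Without that, the operators in question are nontrivial weighted composition operators and the ``no eigenvector'' conclusion requires a different argument. It is also telling that your argument for (i) never uses the admissibility of $\eta$, only the eigenvector property; the paper's proof leans on admissibility essentially (it is what makes Moscovici's lemma applicable), so you are implicitly claiming a strictly stronger statement than the one the paper establishes. A smaller point: you read $G/H\cong\mathcal O_\pi$ as a $G$-equivariant isomorphism to conjugate $H$ into $G_\ell$; the paper only extracts connectedness of $H$ from it.

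For (ii) your route coincides with the paper's (pass to the projective representation of $G/\pker(\pi)$, identify $d_{\pi,\eta}$ with the formal degree, apply the density theorem), but the two points you defer as ``obstacles'' are exactly where the content lies. The coupling-constant inequality for square-integrable \emph{projective} representations restricted to lattices is acceptable to quote, since it is precisely \cite[Theorem 7.4]{romero2020density} (Theorem~\ref{thm:density_lattices} in the paper). The real unfilled gap is the verification that $p(\Gamma)$ is a lattice in $G/\pker(\pi)$, equivalently that $\Gamma\cap\pker(\pi)$ is uniform in $\pker(\pi)$: this is not automatic for an arbitrary closed connected normal subgroup (an irrational subgroup would make $p(\Gamma)$ non-discrete and $\covol(p(\Gamma))$ meaningless), and you correctly identify it as the delicate point but do not prove it. The paper settles it by combining the identification $\pker(\pi)=G(\ell)$ from part (i) with the rationality results \cite[Proposition 5.2.6, Theorem 5.1.11]{corwin1990representations}. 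Until both the multiplication-operator claim in (i) and the rationality of $\pker(\pi)$ with respect to $\Gamma$ in (ii) are supplied, the proposal is not a complete proof.
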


Theorem \ref{thm:intro} considers $\pi$-systems of coherent states parametrised by the canonical phase space $\mathcal{O}_{\pi}$ (cf. \cite[Section 10]{perelomov1986generalized}), and provides a necessary condition for the completeness of associated subsystems.
 The representations satisfying the hypothesis of Theorem \ref{thm:intro} are called \emph{coherent state representations} in \cite{moscovici1977coherent}, and are characterised as those being an irreducible representation whose associated coadjoint orbit is a linear variety. The considered representations are therefore essentially square-integrable, like in the special case of the Heisenberg group.

The second result concerns $\pi$-systems of coherent states associated to vectors yielding a symplectic projective orbit (cf. Section~\ref{sec:perelomov} for the precise definitions.)

\begin{theorem} \label{thm:intro2}
Let $G$ be a connected, simply connected nilpotent Lie group and let $\Gamma \leq G$ be a discrete, co-compact subgroup. Suppose $(\pi, \Hpi)$ is an irreducible representation of $G$ that admits an admissible vector $\eta \in \Hpi \setminus \{0\}$ yielding a symplectic orbit and defines a $\pi$-system of coherent states based on $X = G / G_{[\eta]}$, with admissibility constant $d_{\pi, \eta} > 0$.
Then
\begin{enumerate}[(i)]
\item $G_{[\eta]} = \big\{ g \in G : \pi (g) \in \mathbb{C} \cdot I_{\Hpi} \big\}$;
\item If  $\{ \pi (\sigma (\gamma')) \eta \}_{\gamma' \in \Gamma \cdot o}$ is complete in $\Hpi$, then $\covol(p(\Gamma)) d_{\pi, \eta} \leq 1$.
\end{enumerate}
\end{theorem}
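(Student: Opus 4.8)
The plan is to establish (i) at the Lie algebra level via the Fubini--Study moment map and then deduce (ii) from the cyclicity criterion for restricted representations. For (i), the inclusion $\pker(\pi) \subseteq G_{[\eta]}$ is immediate, since $\pi(g) \in \mathbb{C}\cdot I_{\Hpi}$ forces $\pi(g)\eta \in \mathbb{C}\eta$. For the reverse inclusion I would pass to Lie algebras. Writing $\mathfrak{h}$ and $\mathfrak{p}$ for the Lie algebras of $G_{[\eta]}$ and of $\pker(\pi)$, skew-adjointness of $d\pi(X)$ gives $X \in \mathfrak{h} \iff d\pi(X)\eta \in i\mathbb{R}\eta$ and $X \in \mathfrak{p} \iff d\pi(X) \in i\mathbb{R}\cdot I_{\Hpi}$, so $\mathfrak{p} \subseteq \mathfrak{h}$. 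The heart of the matter is to pull back the Fubini--Study form on $\PH$ along the orbit map $gG_{[\eta]} \mapsto [\pi(g)\eta]$: a short computation with $\eta$ normalised shows that, for the moment functional $\alpha \in \frakg^*$ determined by $i\alpha(X) = \langle d\pi(X)\eta, \eta\rangle$,
\[
\omega(X,Y) \;=\; \tfrac{1}{2i}\,\langle d\pi([X,Y])\eta, \eta\rangle \;=\; \tfrac12\,\alpha([X,Y]),
\]
the Kirillov--Kostant--Souriau form of $\alpha$, whose radical is the coadjoint stabiliser $\frakg_\alpha = \{X : \alpha([X,\frakg]) = 0\}$. The orbit is symplectic precisely when $\omega$ is non-degenerate on $\frakg/\mathfrak{h}$, i.e. when $\mathfrak{h} = \frakg_\alpha$.

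It then remains to identify $\frakg_\alpha$ with $\mathfrak{p}$. Here I would combine Kirillov theory with admissibility. Since $d\pi(X) = i\alpha(X)\,I_{\Hpi}$ for $X \in \mathfrak{p}$, the functional $\alpha$ agrees with a representative $\ell \in \mathcal{O}_{\pi}$ on $\mathfrak{p}$; and admissibility of $\eta$ forces $\pi$ to be square-integrable modulo $G_{[\eta]}$, so that by Moore--Wolf theory $\mathcal{O}_{\pi}$ is a flat (affine) orbit whose stabiliser is constant and equal to $\mathfrak{p}$. Consequently $\alpha \in \mathcal{O}_{\pi}$ and $\frakg_\alpha = \mathfrak{p}$, whence $\mathfrak{h} = \frakg_\alpha = \mathfrak{p}$. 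As $G$ is simply connected nilpotent, exponentiating the equality of subalgebras yields $G_{[\eta]} = \pker(\pi)$, which is (i). I expect the main obstacle to be exactly this identification $\frakg_\alpha = \mathfrak{p}$ --- equivalently, deducing flatness of $\mathcal{O}_{\pi}$ from the existence of an admissible vector for $G/G_{[\eta]}$ --- which is where the characterisations of coherent state representations in \cite{moscovici1977coherent, lisiecki1990kaehler, neeb1996coherent} enter.

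For (ii), granting (i), the restriction of $\pi$ to $H = G_{[\eta]} = \pker(\pi)$ is scalar, so $\pi$ descends to a square-integrable projective representation $\rho$ of the simply connected nilpotent quotient $G/\pker(\pi)$ on $\Hpi$, with formal dimension equal to the admissibility constant $d_{\pi,\eta}$ of \eqref{eq:resolutionID}. Because $\sigma(p(\gamma))^{-1}\gamma \in H$ acts by the character $\chi$, each $\pi(\sigma(\gamma'))\eta$ agrees up to a unimodular scalar with $\pi(\gamma)\eta$; hence the subsystem $\{\pi(\sigma(\gamma'))\eta\}_{\gamma' \in \Gamma\cdot o}$ is complete in $\Hpi$ if and only if $\eta$ is a cyclic vector for $\pi|_{\Gamma}$, equivalently for $\rho|_{p(\Gamma)}$.

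Finally I would invoke the necessary density condition for cyclicity of restricted square-integrable (projective) representations from \cite{bekka2004square, romero2020density}: the von Neumann dimension of $\Hpi$ as a module over the twisted group von Neumann algebra of $p(\Gamma)$ equals $\covol(p(\Gamma))\,d_{\pi,\eta}$, and a cyclic vector can exist only when this dimension is at most $1$, yielding $\covol(p(\Gamma))\,d_{\pi,\eta} \le 1$. The scaling invariance of the product follows from \eqref{eq:resolutionID}, since renormalising $\mu_X$ by a factor $c$ multiplies $\covol(p(\Gamma))$ by $c$ and $d_{\pi,\eta}$ by $c^{-1}$. The remaining bookkeeping is to confirm that $\covol(p(\Gamma))$, read as $\vol(\Gamma\backslash X)$, is finite --- which it is, by co-compactness of $\Gamma$ --- and that the cited density theorem applies with the stabiliser $H = \pker(\pi)$ furnished by part (i).
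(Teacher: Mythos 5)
Your overall strategy is the same as the paper's: for (ii) you descend to a square-integrable projective representation of $G/\pker(\pi)$, identify $d_{\pi,\eta}$ with its formal dimension, observe that completeness of the subsystem is equivalent to cyclicity of $\eta$ for the restriction to $p(\Gamma)$, and invoke the density theorem of \cite{romero2020density}; for (i) you rely on the same two external inputs the paper uses, namely the Guillemin--Sternberg characterisation of symplectic orbits and the flatness of the coadjoint orbit coming from Moscovici's characterisation of coherent state representations. The difference is that you run part (i) at the Lie algebra level, and this is where the gaps are.

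Concretely: the step ``exponentiating the equality of subalgebras yields $G_{[\eta]} = \pker(\pi)$'' does not follow. Equality of Lie algebras only identifies the identity component of $G_{[\eta]}$ with $\pker(\pi)$; a closed subgroup of a simply connected nilpotent group can be disconnected, and Remark \ref{rem:SI} of the paper records exactly that the conclusion $H=\pker(\pi)$ fails for disconnected stabilising subgroups. What closes this is the group-level containment $G_{[\eta]} \subseteq G(J_{\pi}([\eta]))$ (from equivariance of the moment map) together with the connectedness of coadjoint stabilisers of simply connected nilpotent groups; this is how Proposition \ref{prop:connected_isotropy}, via Lemma \ref{lem:symplectic_orbit} (which is a statement about $G_{[\eta]}$ being an \emph{open subgroup} of $G(J_\pi([\eta]))$, not merely about tangent spaces), establishes connectedness of $G_{[\eta]}$ \emph{first}. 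The order matters for your next step too: the implication ``admissible mod $G_{[\eta]}$ $\Rightarrow$ flat orbit with constant stabiliser $\mathfrak{p}$'' is \cite[Lemma 3.5]{moscovici1977coherent}, which requires the subgroup to be connected, so you cannot invoke it before connectedness is in hand. Finally, in (ii) you verify finiteness of $\covol(p(\Gamma))$ but not that $p(\Gamma)$ is \emph{discrete} in $G/\pker(\pi)$; this is not automatic for the image of a lattice under a quotient map and is obtained in the paper from the fact that $\Gamma\cap\pker(\pi)$ is uniform in $\pker(\pi)=G(\ell)$ via \cite[Proposition 5.2.6 and Theorem 5.1.11]{corwin1990representations}. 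All of these gaps are reparable with the paper's own lemmas, so your proposal is a correct skeleton of the intended argument rather than a complete proof.
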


In contrast to Theorem \ref{thm:intro}, the index manifold $X = G / G_{[\eta]}$ in Theorem \ref{thm:intro2} is selected via the maximal subgroup \eqref{eq:projective_stabiliser} stabilising the state determined by
 $\eta \in \Hpi \setminus \{0\}$. The vectors $\eta \in \Hpi$ yielding a symplectic orbit
 play a distinguished role in geometric quantization \cite{kostant1982symplectic, Odzijewicz1992coherent}.
 Theorem \ref{thm:intro2} applies, in particular, to smooth vectors of a square-integrable representation (see Proposition \ref{prop:SI_symplectic}) and to so-called \emph{highest weight vectors} (see Remark \ref{rem:complex_orbit}).

 The proofs of Theorem \ref{thm:intro} and Theorem \ref{thm:intro2} are relatively simple and short, but they hinge on a combination of several non-trivial statements on coherent state representations \cite{moscovici1977coherent, lisiecki1990kaehler, neeb1996coherent} and density conditions for restricted discrete series \cite{romero2020density, bekka2004square}. More precisely, exploiting results of \cite{moscovici1977coherent, lisiecki1990kaehler, neeb1996coherent}, it will be shown that
 the completeness of coherent state subsystems is equivalent to the admissible vector being a cyclic vector for a restricted \emph{projective} representation; the necessary density conditions then being a direct consequence of \cite{romero2020density}.

\subsection*{Notation} For a complex vector space $\mathcal{H}$, the notation $\mathrm{P}(\mathcal{H})$ will be used for its projective space, i.e. the space of all one-dimensional subspaces. The subspace or ray generated by $\eta \in \mathcal{H} \setminus \{0\}$ will be denoted by $[\eta] := \mathbb{C} \eta$. Henceforth, unless stated otherwise, $G$ is a connected, simply connected nilpotent Lie group with exponential map $\exp : \mathfrak{g} \to G$. Haar measure on $G$ is denoted by $\mu_G$. If $\Lambda \leq G$ is a discrete subgroup, then the co-volume is defined as
$\covol(\Lambda) := \mu_{G / \Lambda} (G/\Lambda)$, where $\mu_{G/\Lambda}$ denotes $G$-invariant Radon measure on $G/\Lambda$.

\section{Coherent state representations of nilpotent Lie groups} \label{sec:prelim}
This section provides preliminaries on irreducible representations of nilpotent Lie groups and associated coherent states. References for these topics are the books \cite{corwin1990representations} and \cite{ali2014coherent, perelomov1986generalized}.

\subsection{Coadjoint orbits} \label{sec:coadjoint}
Let  $\mathfrak{g}^*$  denote the dual vector space of $\mathfrak{g}$.
The coadjoint representation $\capad^{*} : G \to
\mathrm{GL}(\mathfrak{g}^{*})$ is defined by $\capad^{*} (g) \ell=
\ell\circ\capad(g)^{-1}$ for $g \in G$ and $\ell \in \mathfrak{g}^*$.
The stabiliser of $\ell\in\mathfrak{g}^{*}$ is the connected closed subgroup
$G(\ell) = \{  g\in G: \capad^*(g) \ell=\ell\}  $, its Lie
algebra is the annihilator subalgebra $\mathfrak{g} (\ell)=\{
X\in\mathfrak{g}: \ell([Y, X])
=0,\;\forall Y \in\mathfrak{g}\}  . $

For $\ell\in\mathfrak{g}^{*}$, its \emph{coadjoint orbit} is denoted
by $\mathcal{O}_{\ell} := \capad^*(G) \ell$ and endowed with the relative topology from
$\mathfrak{g}^*$. The orbit $\mathcal{O}_{\ell}$ is homeomorphic to $G / G(\ell)$; in notation: $\mathcal{O}_{\ell} \cong G / G(\ell)$.

\subsection{Irreducible representations}
A Lie subalgebra $\mathfrak{p}$ of $\mathfrak{g}$ is \emph{subordinated} to
$\ell\in\mathfrak{g}^{\ast}$ if $\ell(X) =0$ for
every $X\in [  \mathfrak{p,p}]  $. If $\mathfrak{p}$ is subordinate
to $\ell$, then the map $\chi_{\ell} : \exp( \mathfrak{p}) \to\mathbb{T}$,
$\chi_{\ell}\left(  \exp (X)\right)  =e^{2\pi i\ell(X)} $ defines a unitary character of $P = \exp(\mathfrak{p})$. The associated
induced representation of $G$ is denoted by
$
\pi_{\ell} = \pi(\ell, \mathfrak{p}) = \mathrm{ind}_{P}^{G}\left(  \chi_{\ell
}\right)  .
$

For every $\pi$ in the unitary dual $\widehat{G}$
of $G$, there exists $\ell\in\mathfrak{g}^{*}$ and a subalgebra $\mathfrak{p}
\subset\mathfrak{g}$, subordinate to $\ell$, such that $\pi$ is unitarily
equivalent to $\pi_{\ell} = \pi(\ell, \mathfrak{p})$.
A representation $\pi_{\ell} = \pi(\ell,
\mathfrak{p})$, with $\mathfrak{p}$
subordinate to $\ell \in \mathfrak{g}^*$, is irreducible if, and only if, $\mathfrak{p}$ is a maximal subalgebra subordinated to
$\ell\in\mathfrak{g}^{*}$ satisfying
$
\dim\left(  \mathfrak{p}\right)  =\dim\left(  \mathfrak{g}\right)  - \dim( \mathcal{O}_{\ell}) / 2,
$ a so-called \emph{(real) polarisation}.

Two irreducible induced representations $\mathrm{ind}_{\exp (
\mathfrak{p})  }^{G}(  \chi_{\ell})  $ and $\mathrm{ind}
_{\exp(  \mathfrak{p}^{\prime})  }^{G}(  \chi_{\ell^{\prime}
})  $ are unitarily equivalent if and only if the linear functionals
$\ell,\ell^{\prime} \in\mathfrak{g}^{*}$ belong to the same coadjoint orbit.
The orbit associated to the equivalence class $\pi\in\widehat{G}$ will also be
denoted by $\mathcal{O}_{\pi}$.

\subsection{Moment set}
Let $(\pi, \Hpi)$ be an irreducible unitary representation of $G$.
Denote by $\Hs$ the space of smooth vectors for $\pi$, i.e. the space of $\eta \in \Hpi$ for which $g \mapsto \pi(g) \eta$ is smooth.

The derived representation $d\pi : \mathfrak{g} \to L(\Hpi^{\infty})$ is defined by
\begin{align} \label{eq:derived_rep}
d\pi (X) \eta = \frac{d}{dt} \bigg|_{t = 0} \pi (\exp (tX)) \eta, \quad X \in \mathfrak{g}, \;\eta \in \Hs.
\end{align}
It can be extended complex linearly to a representation of the complexification $\mathfrak{g}_{\mathbb{C}}$ of $\mathfrak{g}$.

The  \emph{moment map} of $\pi$ is the mapping $J_{\pi} : \Hs \to \mathfrak{g}^*$ defined by
\begin{align} \label{eq:moment_map}
J_{\pi} (\eta)(X) = \frac{1}{i} \frac{\langle d\pi (X) \eta, \eta \rangle}{\langle \eta, \eta \rangle}, \quad X \in \mathfrak{g}, \; \eta \in \Hs.
\end{align}
Note that the right-hand side of \eqref{eq:moment_map} only depends on the ray $[\eta]$ generated by $\eta \in \Hs \setminus \{0\}$.

The moment map $J_{\pi}$ is equivariant with respect to the canonical $G$-actions on $\Hs$ and $\mathfrak{g}^*$, i.e.
$J_{\pi} (\pi (g) \eta ) (X) = (\capad(g)^* J_{\pi} (\eta)) (X)$ for $g \in G$, $X \in \mathfrak{g}$ and $\eta \in \Hs $.
In particular,  $J_{\pi} (G \cdot \eta)$ is the coadjoint orbit $\mathcal{O}_{J_{\pi} (\eta)}$ of $J_{\pi} (\eta) \in \mathfrak{g}^*$.

The \emph{moment set} $I_{\pi}$ of $\pi$ is the closure $I_{\pi} := \overline{J_{\pi} (\Hs)}$ in $\mathfrak{g}^*$. Its relation to the coadjoint $\mathcal{O}_{\pi}$ of $\pi \in \widehat{G}$ is
\begin{align} \label{eq:momentset}
 I_{\pi} = \overline{\conv} (\mathcal{O}_{\pi}),
\end{align}
where $\overline{\conv}$ denotes the closed convex hull; see \cite[Theorem 4.2]{wildberger1989convexity}.

\subsection{Coherent state representations} \label{sec:coherent_state_rep}
Henceforth, it is assumed that $(\pi, \Hpi)$ is non-trivial.
Let $\eta \in \Hpi$ be a unit vector and let $H \leq G$ be a closed subgroup such that there exists a unitary character $\chi : H \to \mathbb{T}$ satisfying
\begin{align} \label{eq:stable_character}
\pi (h) \eta = \chi (h) \eta, \quad h \in H.
\end{align}
Denote $X := G/H$  and let $\mu_X$ be $G$-invariant Radon measure on $X$, which is unique up to scalar multiplication. Fix a Borel cross-section $\sigma : X \to G$ for the quotient map $p : G \to X$.
The vector $\eta$ is called \emph{admissible}
if \begin{align} \label{eq:admissible}
\int_X | \langle \eta, \pi (\sigma(x)) \eta \rangle |^2 \; d\mu_X (x) < \infty.
\end{align}
A pair $(\eta, \chi)$ satisfying \eqref{eq:stable_character} and \eqref{eq:admissible} is said to define a \emph{$\pi$-system of coherent states} based on $X = G / H$. The condition \eqref{eq:admissible} is independent of the particular choice of section $\sigma$.

For a $\pi$-system of coherent states, there exists an \emph{admissibility constant} $d_{\pi, \eta} > 0$ such that, for all $f \in \Hpi$,
\begin{align} \label{eq:orthogonality}
\int_X | \langle f, \pi (\sigma(x)) \eta \rangle |^2 \; d\mu_X (x) = d_{\pi, \eta}^{-1} \| f \|_{\Hpi}^2.
\end{align}
For further properties on square-integrability modulo a subgroup, see, e.g. \cite{neeb1997square, moscovici1978coherent}.

An irreducible representation $(\pi, \Hpi)$ is called a \emph{coherent state representation} if it admits a $\pi$-system of coherent states based on connected, simply connected homogeneous $G$-space $X$.\footnote{The definition of a coherent state representation used here is the same as in \cite{neeb1997square, moscovici1977coherent, moscovici1978coherent}, but differs from the definition in \cite{neeb1996coherent, lisiecki1991classification, lisiecki1990kaehler}, where the square-integrability assumption \eqref{eq:admissible} is not part of the definition.}

\section{Completeness of coherent state subsystems}
This section considers the relation between subsystems of coherent states parametrised by a simply connected $G$-space and lattice orbits of an associated projective representation.

\subsection{Projective kernel}
The \emph{kernel} and \emph{projective kernel} of a unitary representation $(\pi, \Hpi)$ of $G$ are defined by
\[
\ker(\pi) = \{ g \in G \; : \; \pi (g) =  I_{\Hpi} \} \quad \text{and} \quad \pker(\pi) = \{ g \in G \; : \; \pi (g) \in \mathbb{C} \cdot I_{\Hpi} \},
\]
respectively. If $(\pi, \Hpi)$ is non-trivial and irreducible, then $\pker(\pi) \leq G$ is a connected, closed normal subgroup, and there exists $\chi_{\pi} : \pker(\pi) \to \mathbb{T}$ such that $\pi(g) = \chi_{\pi} (g) I_{\Hpi}$ for $g \in \pker(\pi)$.

The following observation plays a key role in the sequel. Its proof is a combination of results in \cite{moscovici1977coherent}, which characterise coherent state representations $\pi$ in terms of their coadjoint orbit.

\begin{proposition} \label{prop:isotropy_kernel}
Let $H \leq G$ be a connected subgroup. Suppose $\pi$ admits a $\pi$-system of coherent states based on $G / H$.
 Then
$H = \pker(\pi)$. In particular,  $H \leq G$ is normal.

\end{proposition}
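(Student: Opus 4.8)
The plan is to establish the two inclusions $\pker(\pi)\subseteq H$ and $H\subseteq\pker(\pi)$ separately; the final assertion that $H$ is normal is then immediate, since $\pker(\pi)$ is a connected, closed normal subgroup by the discussion preceding the statement. Throughout I write $N:=\pker(\pi)$ and let $\chi_{\pi}:N\to\mathbb{T}$ be the character with $\pi(g)=\chi_{\pi}(g)I_{\Hpi}$ for $g\in N$. The substance of the argument lies in the inclusion $H\subseteq N$, for which I would invoke the characterisation of coherent state representations in terms of their coadjoint orbit from \cite{moscovici1977coherent}.

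For the inclusion $N\subseteq H$ I would argue directly from admissibility \eqref{eq:orthogonality} together with the normality of $N$. Fix $n\in N$. Since $N$ is normal, for each $x\in X$ one has $n\sigma(x)=\sigma(x)n'$ with $n'=\sigma(x)^{-1}n\sigma(x)\in N$, so that $\pi(n)\pi(\sigma(x))\eta=\chi_{\pi}(n')\pi(\sigma(x))\eta$ and hence $|\langle f,\pi(n\sigma(x))\eta\rangle|=|\langle f,\pi(\sigma(x))\eta\rangle|$ for every $f\in\Hpi$. Writing $n\sigma(x)=\sigma(n\cdot x)h$ with $h\in H$ then gives $|\langle f,\pi(\sigma(n\cdot x))\eta\rangle|=|\langle f,\pi(\sigma(x))\eta\rangle|$, so the integrand in \eqref{eq:orthogonality} is invariant under the left $N$-action on $X=G/H$. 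If $N\not\subseteq H$, the orbits of this action are positive-dimensional and carry infinite invariant measure; decomposing $\mu_X$ along the fibration $X\to N\backslash X$ (whose fibres are $\cong N/(N\cap H)$) would then force the nonnegative, fibrewise-constant integrand to vanish $\mu_X$-almost everywhere, contradicting \eqref{eq:orthogonality} for any $f\neq 0$. Hence $N\subseteq H$.

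For the inclusion $H\subseteq N$ I would pass to the coadjoint orbit. Taking an admissible smooth vector $\eta\in\Hs$, set $\ell:=J_{\pi}(\eta)\in\mathfrak{g}^{*}$, so that $\ell\in\mathcal{O}_{\pi}$. For $h\in H$ the stabiliser condition \eqref{eq:stable_character} gives $\pi(h)\eta\in\mathbb{C}\eta$, whence $J_{\pi}(\pi(h)\eta)=J_{\pi}(\eta)=\ell$, since $J_{\pi}$ depends only on the ray; by the equivariance of the moment map this reads $\capad^{*}(h)\ell=\ell$, i.e. $h\in G(\ell)$. It therefore suffices to show $G(\ell)=N$. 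Here I would use that, by \cite{moscovici1977coherent}, a coherent state representation has a flat coadjoint orbit, so that $\mathcal{O}_{\pi}=\ell+\mathfrak{g}(\ell)^{\perp}$ is an affine subspace and the radical $\mathfrak{g}(\ell)$ is an ideal of $\mathfrak{g}$. A short computation with the Kirillov correspondence then identifies $\operatorname{Lie}(N)=\{X\in\mathfrak{g}:[X,\mathfrak{g}]\subseteq\ker d\pi\}$ with $\mathfrak{g}(\ell)$: flatness gives $\ker d\pi=\mathfrak{g}(\ell)\cap\ker\ell$, the ideal property yields $[\mathfrak{g}(\ell),\mathfrak{g}]\subseteq\mathfrak{g}(\ell)\cap\ker\ell$, and the reverse inclusion is immediate from the definition of $\mathfrak{g}(\ell)$. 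As $G(\ell)$ and $N$ are both connected, this gives $G(\ell)=N$, and therefore $H\subseteq N$.

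The main obstacle is this second inclusion, and specifically its two imported ingredients: the flatness of $\mathcal{O}_{\pi}$ for coherent state representations (the heart of \cite{moscovici1977coherent}) and the resulting identification $G(\ell)=\pker(\pi)$. A secondary technical point is that the moment-map step uses a smooth admissible vector; I would either record that an admissible $\eta$ may be chosen in $\Hs$ for these representations, or else bypass the moment map by noting that $H/N$ stabilises the ray $[\eta]$ for the projective representation of $G/N$ induced by $\pi$, whose projective kernel is trivial, and then exclude nontrivial such stabilisers by the same invariance-and-infinite-measure argument used for $N\subseteq H$.
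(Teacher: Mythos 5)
Your first inclusion, $\pker(\pi)\subseteq H$, is a genuinely different (and essentially sound) argument from anything in the paper: the integrand of \eqref{eq:orthogonality} is constant on the left $\pker(\pi)$-orbits in $G/H$, and for \emph{connected} $H$ these orbits are copies of $N/(N\cap H)\cong\mathbb{R}^k$ with infinite invariant measure, so admissibility forces $k=0$. Do note explicitly that this is exactly where connectedness enters -- for the disconnected $H=\ker(\pi)$ of Remark \ref{rem:SI} the fibres $N/(N\cap H)$ are compact and the argument (correctly) gives nothing. The paper does not split the proof this way: it obtains the equality $H=G(\ell)$ in one stroke from \cite[Lemma 3.5]{moscovici1977coherent}, applied after embedding $\pi$ into $\ind_H^G\chi$ via \cite[Proposition 1.2]{moscovici1977coherent}, and then identifies $\bigcap_{\ell\in\mathcal{O}_\pi}G(\ell)$ with $\pker(\pi)$ by \cite[Theorem 2.1]{bekka1990complemented}.

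The genuine gap is in your reverse inclusion $H\subseteq\pker(\pi)$. The moment-map step requires $\eta\in\Hs$, whereas the hypothesis only supplies an admissible $\eta\in\Hpi$ satisfying \eqref{eq:stable_character}; you flag this, but neither proposed repair closes it. Replacing $\eta$ by a smooth admissible vector destroys precisely the $H$-semi-invariance on which the identity $J_\pi([\pi(h)\eta])=J_\pi([\eta])$ rests, and it is not clear that the $\chi$-eigenspace of $\pi|_H$ contains any nonzero smooth vector: G\aa rding smoothing $\pi(\phi)\eta$, $\phi\in C_c^\infty(G)$, does not preserve this eigenspace when $H$ is non-compact. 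The second repair fails for a structural reason: a right-$(H/N)$-invariant integrand that is square-integrable over $(G/N)/(H/N)=G/H$ is perfectly consistent with $H\supsetneq N$ -- enlarging $H$ only shrinks the domain of integration and \emph{improves} integrability -- so no infinite-measure contradiction is available on this side, in contrast to the first inclusion. There is also a smaller ordering slip: $J_\pi([\eta])$ lies a priori only in $I_\pi=\overline{\conv}(\mathcal{O}_\pi)$, so the assertion $\ell\in\mathcal{O}_\pi$ already presupposes the flatness you invoke only afterwards. Since the flatness of $\mathcal{O}_\pi$ and the identification $G(\ell)=\pker(\pi)$ are in any case imported from \cite{moscovici1977coherent} -- the same source the paper uses -- the cleanest fix is simply to quote \cite[Lemma 3.5]{moscovici1977coherent} for $H=G(\ell)$ as the paper does, which requires no smoothness of $\eta$ and subsumes both of your inclusions.
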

\begin{proof}
If $\pi$ admits a pair $(\eta, \chi)$ satisfying \eqref{eq:stable_character} and \eqref{eq:admissible}, then $\pi$ is unitarily equivalent to a subrepresentation of the induced representation $\ind_{H}^G \chi$, see, e.g. \cite[Proposition 1.2]{moscovici1977coherent}. Since $H \leq G$ is assumed to be connected, it follows by \cite[Lemma 3.5]{moscovici1977coherent} that $H = G(\ell)$ for any $\ell \in \mathcal{O}_{\pi}$. By \cite[Theorem 2.1]{bekka1990complemented}, the projective kernel of an arbitrary irreducible representation $\pi$ of $G$  is given by $\pker(\pi) = \bigcap_{\ell \in \mathcal{O}_{\pi}} G(\ell)$. Therefore, $\pker(\pi) = \bigcap_{\ell \in \mathcal{O}_{\pi}} G(\ell) = H$.
\end{proof}

The conclusion of Proposition \ref{prop:isotropy_kernel} may fail for disconnected subgroups $H \leq G$ whenever $\pi$ has a discrete kernel:

\begin{remark} \label{rem:SI}
Let $(\pi, \Hpi)$ be an irreducible unitary representation of $G$.
\begin{enumerate}[(a)]
\item If $\pi$ is square-integrable modulo $K = \ker(\pi)$, then $\pi|_{K}$ satisfies \eqref{eq:stable_character} for the trivial character $\chi \equiv 1$ and any vector $\eta \in \Hpi$ defines a $\pi$-system of coherent states based on $G/K$.
\item If $\pi$ is square-integrable modulo $Z = Z(G)$, then $\pi|_{Z}$ satisfies \eqref{eq:stable_character} for
the central character $\chi \in \widehat{Z}$ and any vector $\eta \in \Hpi$
defines a $\pi$-system of coherent states based on $G/Z$. Moreover, $\pker(\pi) = Z(G)$ by \cite[Corollary 4.5.4]{corwin1990representations}.
\end{enumerate}
\end{remark}

\subsection{Necessary density conditions}
A \emph{uniform subgroup} $\Gamma \leq G$ is a discrete subgroup such that $\Gamma \backslash G$ is compact. For a nilpotent Lie group $G$, the uniformity of a discrete subgroup $\Gamma \leq G$ is equivalent to $\Gamma$ having finite co-volume, see, e.g. \cite[Corollary 5.4.6]{corwin1990representations}.

The following result provides a criterium for cyclicity of restricted (projective) representations in terms of the lattice co-volume or density  (cf. \cite[Theorem 7.4]{romero2020density}).

\begin{theorem}[\cite{romero2020density}] \label{thm:density_lattices}
Let $(\pi, \Hpi)$ be an irreducible, square-integrable projective unitary representation of a unimodular group $G$, with formal dimension $d_{\pi} > 0$.  Let $\Gamma \leq G$ be a lattice. If there exists $\eta \in \Hpi$ such that $\pi (\Gamma) \eta$ is complete in $\Hpi$, then $\covol(\Gamma) d_{\pi} \leq 1$.
\end{theorem}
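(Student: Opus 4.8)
The plan is to recast the completeness of $\pi(\Gamma)\eta$ as the existence of a cyclic vector for a finite von Neumann algebra, and then to read off the inequality from the Murray--von Neumann coupling constant. Let $\mathcal{M} := \pi(\Gamma)''$ be the von Neumann algebra generated by the operators $\pi(\gamma)$, $\gamma \in \Gamma$; since $\pi$ is only a projective representation, $\mathcal{M}$ is the twisted group von Neumann algebra $\mathcal{L}_{\sigma}(\Gamma)$ attached to the cocycle $\sigma$ of $\pi$. The first step is to observe that $\mathcal{M}$ is a \emph{finite} von Neumann algebra: square-integrability of $\pi$ forces the matrix coefficients $\gamma \mapsto \langle \pi(\gamma)\xi, \xi\rangle$ to lie in $\ell^2(\Gamma)$ for admissible $\xi$, which yields a faithful normal tracial state $\tau$ on $\mathcal{M}$ (a vector state normalised through $d_{\pi}$). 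By definition, $\Span \pi(\Gamma)\eta = \Hpi$ means exactly that $\eta$ is a cyclic vector for $\mathcal{M}$.

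The second step is dimension theory for finite von Neumann algebras. For such an $\mathcal{M}$ acting on $\Hpi$ with commutant $\mathcal{M}'$, there is a well-defined coupling constant $\dim_{\mathcal{M}}(\Hpi) \in (0,\infty]$, and a basic structural fact is that $\mathcal{M}$ admits a cyclic vector if and only if $\dim_{\mathcal{M}}(\Hpi) \leq 1$. Applied to the cyclic vector $\eta$ produced above, this gives $\dim_{\mathcal{M}}(\Hpi) \leq 1$ at once. The entire content of the theorem is thus concentrated in identifying this coupling constant with the geometric quantity $\covol(\Gamma)\,d_{\pi}$.

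This identification is the main obstacle, being the extension of the Atiyah--Schmid formula to square-integrable projective representations. I would establish it by transporting the problem into $L^2(G)$: square-integrability provides an isometric intertwiner $V_{\xi} : \Hpi \to L^2(G)$, $f \mapsto d_{\pi}^{1/2}\langle f, \pi(\cdot)\xi\rangle$, onto a left-invariant reproducing-kernel subspace, on which $\mathcal{M}$ and its commutant correspond respectively to the left- and right-regular structures of $G$ restricted along $\Gamma$. A Weil periodisation argument over a fundamental domain for $\Gamma$ then evaluates the canonical trace on $\mathcal{M}$ against a projection in $\mathcal{M}'$ and yields $\dim_{\mathcal{M}}(\Hpi) = \covol(\Gamma)\,d_{\pi}$; this is precisely the computation carried out in \cite{bekka2004square, romero2020density}. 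Combining the two facts gives $\covol(\Gamma)\,d_{\pi} = \dim_{\mathcal{M}}(\Hpi) \leq 1$, as claimed. The delicate points are verifying finiteness of $\mathcal{M}$ in the projective (twisted) setting, where the cocycle must be compatible with a trace, and tracking the normalisation constants relating $\tau$, $\mu_G$, and $d_{\pi}$ so that the coupling constant emerges as the scale-invariant product $\covol(\Gamma)\,d_{\pi}$.
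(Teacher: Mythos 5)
The paper does not actually prove this statement: Theorem \ref{thm:density_lattices} is imported as a black box from \cite[Theorem 7.4]{romero2020density}, and the surrounding text only remarks that a non-projective version follows from \cite{bekka2004square}. Your outline is, in substance, the proof given in that cited reference: realise $\pi(\Gamma)''$ as a normal quotient of the twisted group von Neumann algebra $\mathcal{L}_\sigma(\Gamma)$, use the fact that a finite von Neumann algebra with a cyclic vector has $\mathcal{M}$-dimension at most $1$, and identify $\dim_{\mathcal{M}}(\Hpi)$ with $\covol(\Gamma)\,d_\pi$ via the intertwiner into $L^2(G)$ and periodisation over a fundamental domain. So the architecture is correct and matches the source.

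One sub-claim is inaccurate as stated, though not load-bearing: square-integrability of $\pi$ on $G$ does \emph{not} by itself force the restricted matrix coefficient $\gamma \mapsto \langle \pi(\gamma)\xi,\xi\rangle$ to lie in $\ell^2(\Gamma)$ (a continuous $L^2$ function on $G$ can be badly behaved on a discrete set), and the canonical trace on $\mathcal{M}$ is not a vector state on $\Hpi$. The finiteness of $\mathcal{M}$ is instead obtained from the decomposition $L^2(G) \cong \ell^2(\Gamma) \otimes L^2(\Omega)$ as projective $\Gamma$-modules, which shows $\pi|_\Gamma$ is quasi-contained in a multiple of $\lambda_\sigma$ and lets $\mathcal{M}$ inherit the canonical trace of $\mathcal{L}_\sigma(\Gamma)$; this is the mechanism you in fact invoke in your third paragraph, so the first paragraph's justification should be replaced by it rather than run in parallel.
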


For a genuine representation $\pi$ of  $G$ that is square-integrable modulo the centre $Z(G)$, a version of Theorem \ref{thm:density_lattices} can also be deduced from \cite[Theorem 5]{bekka2004square}; see also \cite[Theorem 3]{bekka2004square} for a converse in the setting of nilpotent Lie groups. However, in order to treat a representation $\pi$ that is merely square-integrable modulo $ \ker(\pi)$ (equivalently, $\pker(\pi)$), the projective version of Theorem \ref{thm:density_lattices} is particularly convenient for the purposes of the present note.

The following completeness result for coherent state subsystems can simply be obtained by combining Proposition \ref{prop:isotropy_kernel} and Theorem \ref{thm:density_lattices}.

\begin{theorem} \label{thm:subsystems_states}
Let $H \leq G$ be a connected subgroup.
Suppose $(\pi, \Hpi)$ is an irreducible representation that admits an admissible vector $\eta \in \Hpi$ defining a $\pi$-system of coherent states
based on $X = G/H$, with admissibility constant $d_{\pi, \eta} > 0$. Then
\begin{enumerate}
 \item[(i)] $H = \pker(\pi)$;
 \item[(ii)] If $\Gamma \leq G$ is uniform and  $\{ \pi (\sigma (\gamma' )) \eta \}_{\gamma' \in \Gamma \cdot o}$ is complete,
then $\covol(p(\Gamma)) d_{\pi, \eta} \leq 1$.
\end{enumerate}
\end{theorem}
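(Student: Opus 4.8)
The plan is to dispatch (i) directly and to reduce (ii) to the restricted-cyclicity criterion of Theorem \ref{thm:density_lattices}. Part (i) is exactly Proposition \ref{prop:isotropy_kernel}: its hypotheses (a connected $H$ together with a $\pi$-system of coherent states based on $G/H$) are precisely those assumed here, so $H = \pker(\pi)$, and in particular $H$ is normal. For (ii) I would then use (i) to replace the $G$-space $X$ by the quotient group $\bar G := G/\pker(\pi)$, a connected, simply connected nilpotent Lie group, with $p : G \to \bar G$ the quotient homomorphism and base point $o = e\pker(\pi)$ the identity of $\bar G$. Because each $g \in \pker(\pi)$ acts as the scalar $\chi_{\pi}(g)$, the formula $\rho(x) := \pi(\sigma(x))$ defines an irreducible projective unitary representation of $\bar G$ on $\Hpi$, well defined up to unimodular phases and satisfying $\pi(g) = \rho(p(g))$ up to a phase. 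Identifying $\mu_X$ with Haar measure on $\bar G$, the resolution of identity \eqref{eq:orthogonality} becomes $\int_{\bar G} | \langle f, \rho(x) \eta \rangle |^2 \, d\mu_{\bar G}(x) = d_{\pi, \eta}^{-1} \| f \|^2$, which exhibits $\rho$ as a square-integrable projective representation of the unimodular group $\bar G$ of formal dimension $d_{\rho} = d_{\pi, \eta}$ (using that $\eta$ is a unit vector).

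Next I would translate the completeness hypothesis into cyclicity. The index set is $\Gamma \cdot o = p(\Gamma) \leq \bar G$, and for $\gamma' = p(\gamma)$ one has $\pi(\sigma(\gamma')) \eta = \rho(\gamma') \eta$ up to a phase, so the closed linear span of $\{ \pi(\sigma(\gamma')) \eta : \gamma' \in \Gamma \cdot o \}$ equals that of $\rho(p(\Gamma)) \eta$. Hence the subsystem is complete precisely when $\eta$ is cyclic for $\rho|_{p(\Gamma)}$. Provided $p(\Gamma)$ is a lattice in $\bar G$, Theorem \ref{thm:density_lattices} applied to $\rho$ and $p(\Gamma)$ then yields $\covol(p(\Gamma)) \, d_{\rho} \leq 1$, that is $\covol(p(\Gamma)) \, d_{\pi, \eta} \leq 1$, as claimed.

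The step I expect to be the main obstacle is verifying that $p(\Gamma)$ is indeed a lattice in $\bar G$. Co-compactness is automatic, since the continuous surjection $\Gamma \backslash G \to p(\Gamma) \backslash \bar G$ carries the compact space $\Gamma \backslash G$ onto $p(\Gamma) \backslash \bar G$. Discreteness, however, is genuinely delicate, because the image of a co-compact lattice under the quotient by a connected normal subgroup need not be discrete unless that subgroup is rational with respect to $\Gamma$, equivalently unless $\Gamma \cap \pker(\pi)$ is a lattice in $\pker(\pi)$. I would resolve this through the rationality theory for homogeneous spaces of nilpotent Lie groups (cf. \cite{corwin1990representations}), which also guarantees that $\covol(p(\Gamma))$ is finite and that the product $\covol(p(\Gamma)) \, d_{\pi, \eta}$ is insensitive to the normalisation of $\mu_X$. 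The remaining bookkeeping, namely matching the formal dimension $d_{\rho}$ of $\rho$ with the admissibility constant $d_{\pi, \eta}$ under this normalisation, is the only other point requiring care.
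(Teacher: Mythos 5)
Your proposal follows the paper's proof essentially verbatim: part (i) is Proposition \ref{prop:isotropy_kernel}, and part (ii) passes to the irreducible square-integrable projective representation of $G/\pker(\pi)$ with formal dimension $d_{\pi,\eta}$, identifies completeness of the subsystem with cyclicity of $\eta$ under the restriction to $p(\Gamma)$, and invokes Theorem \ref{thm:density_lattices}. The one step you flag but leave open---discreteness of $p(\Gamma)$---is settled in the paper exactly as you anticipate, via rationality theory: since $H = G(\ell)$ for $\ell \in \mathcal{O}_{\pi}$, \cite[Proposition 5.2.6, Theorem 5.1.11]{corwin1990representations} show that $\Gamma \cap H$ is uniform in $H$, whence $p(\Gamma)$ is a uniform subgroup of $G/H$ by \cite[Lemma 5.1.4(a)]{corwin1990representations}.
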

\begin{proof}
By Proposition \ref{prop:isotropy_kernel}, the admissibility of $\pi$ implies that $H = \pker(\pi) \leq G$ is normal.
Hence, the induced mapping
$
\pi' : G/H \to \mathcal{U}(\Hpi), \; x \mapsto \pi (\sigma (x))
$
forms an irreducible projective representation of $G/H$. Since the measure $\mu_{X}$ is Haar measure on $X = G/ H$, it follows that $\pi'$ is square-integrable on $G/H$ by the admissibility condition \eqref{eq:admissible}. In particular, the constant $d_{\pi, \eta} > 0$ in \eqref{eq:orthogonality} coincides with the (unique) formal dimension $d_{\pi'} > 0$ of the projective representation $(\pi', \Hpi)$ normalised according to the $G$-invariant measure $\mu_X$.

Suppose $\Gamma \leq G$ is a uniform subgroup. As in the proof of Proposition \ref{prop:isotropy_kernel}, the admissibility of $\pi$ implies that  $\pker(\pi) = G(\ell)$ for any $\ell \in \mathcal{O}_{\pi}$. A combination of \cite[Proposition 5.2.6]{corwin1990representations} and \cite[Theorem 5.1.11]{corwin1990representations} therefore yields that $\Gamma \cap H$ is a uniform subgroup of $H = \pker(\pi)$. Hence, the image $p(\Gamma)$ is a uniform subgroup of $G/H$ by \cite[Lemma 5.1.4(a)]{corwin1990representations}.

In combination, applying Theorem \ref{thm:density_lattices} to $(\pi', \Hpi)$ and $p(\Gamma) \leq G/H$ yields the result.
\end{proof}

\begin{remark}
The constant $d_{\pi, \eta} > 0$ coincides with the formal dimension $d_{\pi'} > 0$ of the projective representation $(\pi', \Hpi)$ of $X = G/\pker(\pi)$. In particular, the product
$\covol(p(\Gamma)) d_{\pi'}$ is independent of the choice of $G$-invariant measure $\mu_X$: if $\mu'_X = c \cdot \mu_X$ for $c > 0$, then $\covol'(p(\Gamma)) = c \cdot \covol(p(\Gamma))$ and $d'_{\pi'} = d_{\pi'} / c$.
\end{remark}

Theorem \ref{thm:intro} follows directly from Proposition \ref{prop:isotropy_kernel} and Theorem \ref{thm:subsystems_states}:

\begin{proof}[Proof of Theorem \ref{thm:intro}]
By assumption, there exists an admissible $\eta \in \Hpi$ and associated character $\chi : H \to \mathbb{T}$ defining a $\pi$-system of coherent states based on  $G / H \cong \mathcal{O}_{\pi}$.
Since $\mathcal{O}_{\pi}$ is simply connected, it follows that $H \subset G$ is connected, see, e.g. \cite[Proposition 1.94]{knapp2002lie}. The conclusions are therefore a direct consequence of Proposition \ref{prop:isotropy_kernel} and Theorem \ref{thm:subsystems_states}.
\end{proof}

\section{Perelomov-type coherent states} \label{sec:perelomov}
Let $(\pi, \Hpi)$ be an irreducible representation of $G$.
Then $\pi$ yields an action of $G$ on the projective spaces $\PH$ and $\PHs$ by $g \cdot [\eta] = [\pi(g) \eta]$.

A system of \emph{Perelomov-type coherent states} is a $G$-orbit in $\PH$,
\[ G \cdot [\eta] = \big \{ [ \pi(g) \eta ] \; : \; g \in G \big\} .\]
Let
$G_{[\eta]}$ be the isotropy group of $\eta \in \Hpi \setminus \{0\}$ in the projective space $\PH$,
\begin{align} \label{eq:isotropy_projective}
 G_{[\eta]} := \big\{ g \in G \; : \; \pi (g) \eta \in \mathbb{C} \eta \big\}.
\end{align}
Denote by $X = G/G_{[\eta]}$ the associated homogeneous space and let $\sigma : X \to G$ be a Borel section for the quotient map $p : G \to X$. Then a Perelomov-type coherent state system is determined
by the system of vectors,
\[
\{ \eta_x \}_{x \in X} = \{ \pi (\sigma(x)) \eta \}_{x \in X}.
\]
See \cite[Section 2]{perelomov1972coherent} and \cite[Chapter 2]{perelomov1986generalized} for basic properties of Perelomov-type states.

Let $\chi_{\eta} : G_{[\eta]} \to \mathbb{T}$ be the unitary character of $ G_{[\eta]}$ such that $\pi(g) \eta = \chi_{\eta} (g) \eta$ for all $g \in G_{[\eta]}$. Note that $G_{[\eta]}$ is the maximal subgroup satisfying the property \eqref{eq:stable_character} for a chosen $\eta$.

The following sections consider Perelomov-type coherent states of vectors $\eta \in \Hs \setminus \{0\}$ with the property that $G \cdot [\eta]$ has a symplectic or complex structure. Such coherent states are of particular interest for geometric quantization, see \cite{Odzijewicz1992coherent} and \cite[Section 16]{perelomov1986generalized}.

\subsection{Symplectic projective orbits} \label{sec:symplectic_orbit}
Following \cite{lisiecki1990kaehler, kostant1982symplectic}, an orbit $G \cdot [\eta] = \{ [\pi(g) \eta] : g \in G \}$ is called \emph{symplectic} if $[\eta] \in \PHs$ and $G \cdot [\eta]$ is a symplectic submanifold of $\PH$.

The following characterisation of symplectic orbits will be used below; see \cite[Theorem 26.8]{guillemin1984symplectic} for its proof.

\begin{lemma}[\cite{guillemin1984symplectic}] \label{lem:symplectic_orbit}
Let $[\eta] \in \PHs$ and let $J_{\pi} : \PHs \to \mathfrak{g}^*$ be the momentum map of $\pi$.
The orbit $G \cdot [\eta]$ is symplectic if, and only if, the stabiliser $G_{[\eta]}$ is an open subgroup of $G( J_{\pi} ([\eta]))$.
\end{lemma}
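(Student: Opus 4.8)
The plan is to reduce the statement to a linear-algebra computation involving the bilinear form $X,Y \mapsto J_{\pi}([\eta])([X,Y])$ on $\mathfrak{g}$, using the standard description of the symplectic form of $\PH$ restricted to a group orbit. Recall that $\PH$ carries the Fubini--Study symplectic form $\omega$, that the $G$-action $g \cdot [\eta] = [\pi(g)\eta]$ is Hamiltonian, and that $J_{\pi}$ is its moment map; thus the moment-map identity $\iota_{X^{\sharp}}\omega = d J_{\pi}^{X}$ holds, where $J_{\pi}^{X} := J_{\pi}(\cdot)(X)$ and $X^{\sharp}$ denotes the fundamental vector field of $X \in \mathfrak{g}$. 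Throughout one works on $\PHs$, where the orbit $G \cdot [\eta]$ is a smooth (immersed) submanifold and all of these objects are well defined.

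First I would compute the restriction of $\omega$ to the tangent space of the orbit at $[\eta]$. Differentiating the equivariance relation $J_{\pi}(g \cdot [\eta]) = \capad^{*}(g) J_{\pi}([\eta])$ and combining it with the moment-map identity gives, for $X, Y \in \mathfrak{g}$,
\begin{align*}
\omega_{[\eta]}\big(X^{\sharp}([\eta]), Y^{\sharp}([\eta])\big) = J_{\pi}([\eta])([X,Y]) =: B(X,Y).
\end{align*}
The linear map $\mathfrak{g} \to T_{[\eta]}(G \cdot [\eta])$, $X \mapsto X^{\sharp}([\eta])$, is surjective with kernel equal to the Lie algebra $\mathfrak{g}_{[\eta]}$ of $G_{[\eta]}$. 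Since $B$ is the pullback of $\omega_{[\eta]}$ along this map, $\mathfrak{g}_{[\eta]}$ lies in the radical of $B$, so $B$ descends to a well-defined form on $\mathfrak{g}/\mathfrak{g}_{[\eta]}$; moreover $\omega$ restricts to a non-degenerate form on $T_{[\eta]}(G \cdot [\eta])$ if and only if this induced form on $\mathfrak{g}/\mathfrak{g}_{[\eta]}$ is non-degenerate.

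Next I would identify the radical of $B$. By the definition of the annihilator subalgebra, an element $X \in \mathfrak{g}$ satisfies $B(X,Y) = 0$ for all $Y \in \mathfrak{g}$ precisely when $X \in \mathfrak{g}(J_{\pi}([\eta]))$, i.e. $\operatorname{rad}(B) = \mathfrak{g}(J_{\pi}([\eta]))$, the Lie algebra of $G(J_{\pi}([\eta]))$. Consequently the induced form on $\mathfrak{g}/\mathfrak{g}_{[\eta]}$ is non-degenerate if and only if $\mathfrak{g}_{[\eta]} = \mathfrak{g}(J_{\pi}([\eta]))$. On the other hand, equivariance of $J_{\pi}$ shows that every $g \in G_{[\eta]}$ fixes $J_{\pi}([\eta])$, so $G_{[\eta]} \leq G(J_{\pi}([\eta]))$ and hence $\mathfrak{g}_{[\eta]} \subseteq \mathfrak{g}(J_{\pi}([\eta]))$ holds unconditionally. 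Since $G_{[\eta]}$ and $G(J_{\pi}([\eta]))$ are Lie subgroups of $G$ with $G_{[\eta]} \leq G(J_{\pi}([\eta]))$, equality of their Lie algebras is equivalent to $G_{[\eta]}$ being open in $G(J_{\pi}([\eta]))$. Chaining these equivalences yields that $G \cdot [\eta]$ is symplectic if and only if $G_{[\eta]}$ is an open subgroup of $G(J_{\pi}([\eta]))$.

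The routine part is the linear algebra of the radical together with the equivariance inclusion $G_{[\eta]} \leq G(J_{\pi}([\eta]))$. The main obstacle is the first step: rigorously establishing the formula for $\omega_{[\eta]}$ on the orbit and, more fundamentally, setting up the Fubini--Study form, the moment-map identity, and the smooth-manifold structure of $G \cdot [\eta]$ in the infinite-dimensional projective space $\PHs$. These analytic points are exactly what is supplied by the cited result \cite[Theorem 26.8]{guillemin1984symplectic}, on which I would rely rather than reprove from scratch.
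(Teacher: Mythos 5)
The paper offers no proof of this lemma at all: it is imported verbatim, with the remark ``see [Theorem 26.8, Guillemin--Sternberg] for its proof,'' so there is no in-paper argument to compare yours against. Your sketch is the standard and correct moment-map proof of that cited theorem --- pulling the Fubini--Study form back to $\mathfrak{g}$, identifying the radical of $B(X,Y)=J_{\pi}([\eta])([X,Y])$ with the annihilator subalgebra $\mathfrak{g}(J_{\pi}([\eta]))$, and comparing it with $\mathfrak{g}_{[\eta]}$ via the unconditional inclusion $G_{[\eta]}\leq G(J_{\pi}([\eta]))$ --- and your concluding deferral to the citation for the analytic setup is precisely what the paper itself does.
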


For the purposes of this note, the significance of a symplectic orbit is that its stabiliser subgroup coincides with the projective kernel, and hence does not depend on the chosen vector.
This is demonstrated by the following proposition.

\begin{proposition} \label{prop:connected_isotropy}
Suppose $\eta \in \Hpi^{\infty} \setminus \{0\}$ is such that $G \cdot [\eta]$ is symplectic. Then $G_{[\eta]}$ is connected.
In particular, if $\eta$ is an admissible vector defining a $\pi$-system of coherent states based on $G / G_{[\eta]}$, then $G_{[\eta]} = \pker(\pi)$.
\end{proposition}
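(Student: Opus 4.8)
The plan is to obtain connectedness of $G_{[\eta]}$ directly from the symplectic hypothesis through Lemma \ref{lem:symplectic_orbit}, and then to read off the identity $G_{[\eta]} = \pker(\pi)$ as an immediate instance of Proposition \ref{prop:isotropy_kernel}. Both cited results carry the essential content, so the argument is short; the only genuine input beyond them is a standard topological fact about open subgroups of connected groups.

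First I would use the symplectic assumption. By Lemma \ref{lem:symplectic_orbit}, the orbit $G \cdot [\eta]$ is symplectic precisely when $G_{[\eta]}$ is an open subgroup of the coadjoint stabiliser $G(J_{\pi}([\eta]))$. As recorded in Section \ref{sec:coadjoint}, the stabiliser $G(\ell)$ of any $\ell \in \mathfrak{g}^*$ is a connected closed subgroup of the nilpotent group $G$. Since an open subgroup of a topological group is also closed---its complement being a union of (open) cosets---an open subgroup of a connected group must coincide with the whole group. Applying this to $G_{[\eta]} \leq G(J_{\pi}([\eta]))$ yields $G_{[\eta]} = G(J_{\pi}([\eta]))$, which is connected, establishing the first assertion.

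For the \emph{in particular} statement, I would simply feed this connectedness into Proposition \ref{prop:isotropy_kernel}. Under the extra hypothesis that $\eta$ is admissible and defines a $\pi$-system of coherent states based on $G / G_{[\eta]}$, put $H := G_{[\eta]}$; the previous paragraph shows $H$ is connected, so Proposition \ref{prop:isotropy_kernel} applies directly and gives $G_{[\eta]} = \pker(\pi)$. There is no substantial obstacle here: the one point to verify is that the coadjoint stabiliser entering Lemma \ref{lem:symplectic_orbit} is connected, which is exactly the property of $G(\ell)$ stated in Section \ref{sec:coadjoint}; the remainder is a direct chaining of the two cited results.
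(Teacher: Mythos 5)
Your proposal is correct and follows essentially the same route as the paper: invoke Lemma \ref{lem:symplectic_orbit} to see that $G_{[\eta]}$ is an open subgroup of the connected stabiliser $G(J_{\pi}([\eta]))$, conclude $G_{[\eta]} = G(J_{\pi}([\eta]))$ since an open subgroup of a connected group is everything, and then apply Proposition \ref{prop:isotropy_kernel} for the final assertion. You in fact spell out the open-subgroup argument slightly more explicitly than the paper does.
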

\begin{proof}
If $G \cdot [\eta]$ is symplectic, then $G \cdot [\eta]$ forms a Hamiltonian $G$-space, with momentum map $J_{\pi} : G \cdot [\eta] \to \mathfrak{g}^*$ given as in \eqref{eq:moment_map}, see, e.g. \cite[Section 2.5]{lisiecki1990kaehler}. Set $\ell := J_{\pi} ([\eta])$. Then, by Lemma \ref{lem:symplectic_orbit},
the stabiliser $G_{[\eta]}$ is an open subgroup of $G (\ell)$.
Since $G(\ell)$ is connected (cf. Section \ref{sec:coadjoint}),
it follows that $G_{[\eta]} = G(\ell)$ is connected. The last assertion follows from Proposition \ref{prop:isotropy_kernel}.
\end{proof}

The following provides a partial converse to Proposition \ref{prop:connected_isotropy}.

\begin{proposition} \label{prop:SI_symplectic}
 Suppose $(\pi, \Hpi)$ is square-integrable modulo $\pker(\pi)$. Then, for any $[\eta] \in \PHs$, the orbit $G \cdot [\eta]$ is symplectic and $G_{[\eta]} = \pker(\pi)$.
\end{proposition}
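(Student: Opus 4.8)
The plan is to exhibit the chain of inclusions $\pker(\pi) \subseteq G_{[\eta]} \subseteq G(\ell)$, where $\ell := J_{\pi}([\eta])$, and then to show that the square-integrability hypothesis forces $G(\ell) = \pker(\pi)$, so that the chain collapses to equalities. Once $G_{[\eta]} = G(\ell)$ is established, the symplecticity of $G \cdot [\eta]$ is immediate from Lemma \ref{lem:symplectic_orbit}, since a subgroup is trivially open in itself.

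First I would record the two elementary inclusions, which hold for every $[\eta] \in \PHs$ with no hypothesis on $\pi$. The inclusion $\pker(\pi) \subseteq G_{[\eta]}$ is immediate from the definitions: if $g \in \pker(\pi)$, then $\pi(g) = \chi_{\pi}(g) I_{\Hpi}$, so $\pi(g)\eta \in \mathbb{C}\eta$ and thus $g \in G_{[\eta]}$. For the second inclusion, if $g \in G_{[\eta]}$ then $g \cdot [\eta] = [\pi(g)\eta] = [\eta]$ in $\PHs$, and the $G$-equivariance of the moment map gives $\capad^*(g)\ell = J_{\pi}(g \cdot [\eta]) = J_{\pi}([\eta]) = \ell$, so that $g \in G(\ell)$. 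Hence $\pker(\pi) \subseteq G_{[\eta]} \subseteq G(\ell)$.

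The substance of the argument is the identity $G(\ell) = \pker(\pi)$. Here I would use that $\pker(\pi)$ is a connected normal subgroup on which $\pi$ acts by the scalar character $\chi_{\pi}$, so that square-integrability modulo $\pker(\pi)$ means (as in Remark \ref{rem:SI}) that every nonzero vector is admissible and defines a $\pi$-system of coherent states based on $G/\pker(\pi)$; in particular $\pi$ is a coherent state representation. As in the proof of Proposition \ref{prop:isotropy_kernel}, the combination of \cite[Lemma 3.5]{moscovici1977coherent} with Bekka's identity $\pker(\pi) = \bigcap_{\ell' \in \mathcal{O}_{\pi}} G(\ell')$ from \cite[Theorem 2.1]{bekka1990complemented} then yields $G(\ell') = \pker(\pi)$ for \emph{every} $\ell' \in \mathcal{O}_{\pi}$. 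It remains to place $\ell = J_{\pi}([\eta])$ in $\mathcal{O}_{\pi}$ itself, rather than merely in the moment set. For this I would invoke Moscovici's characterisation \cite{moscovici1977coherent} that the coadjoint orbit of a coherent state representation is a linear variety; being affine, $\mathcal{O}_{\pi}$ is closed and convex, so \eqref{eq:momentset} gives $I_{\pi} = \overline{\conv}(\mathcal{O}_{\pi}) = \mathcal{O}_{\pi}$, whence $\ell \in J_{\pi}(\Hs) \subseteq I_{\pi} = \mathcal{O}_{\pi}$ and therefore $G(\ell) = \pker(\pi)$.

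Combining these facts, the chain $\pker(\pi) \subseteq G_{[\eta]} \subseteq G(\ell) = \pker(\pi)$ collapses, giving $G_{[\eta]} = \pker(\pi) = G(\ell)$. Since $G_{[\eta]} = G(\ell)$ is open in $G(\ell)$, Lemma \ref{lem:symplectic_orbit} shows that $G \cdot [\eta]$ is symplectic, and the equality $G_{[\eta]} = \pker(\pi)$ is exactly the remaining assertion. I expect the main obstacle to be the identification $G(\ell) = \pker(\pi)$: this is where the square-integrability hypothesis genuinely enters, both through the linear-variety (flat orbit) structure of $\mathcal{O}_{\pi}$ needed to locate $\ell$ on the orbit and through the constancy of the stabilisers $G(\ell')$ along $\mathcal{O}_{\pi}$. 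By contrast, the two inclusions and the final application of Lemma \ref{lem:symplectic_orbit} are formal.
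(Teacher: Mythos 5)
Your proof is correct and follows essentially the same route as the paper: the chain $\pker(\pi)\subseteq G_{[\eta]}\subseteq G(J_{\pi}([\eta]))$ via equivariance of the moment map, the identification $G(\ell)=\pker(\pi)$ for all $\ell\in\mathcal{O}_{\pi}$, the use of the flat-orbit structure together with \eqref{eq:momentset} to place $J_{\pi}([\eta])$ on $\mathcal{O}_{\pi}$ itself, and Lemma \ref{lem:symplectic_orbit} to conclude symplecticity. The only (immaterial) difference is in sourcing: you obtain the flatness of $\mathcal{O}_{\pi}$ and the constancy of the stabilisers from Moscovici's characterisation of coherent state representations and \cite[Theorem 2.1]{bekka1990complemented}, whereas the paper derives both from \cite[Theorems 3.2.3 and 4.5.2]{corwin1990representations} after passing from square-integrability modulo $\pker(\pi)$ to square-integrability modulo $\ker(\pi)$.
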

\begin{proof}
Let $\eta \in \Hs \setminus \{0\}$ be fixed. The inclusion $\pker(\pi) \subseteq G_{[\eta]}$ is immediate. Conversely,
if $g \in G_{[\eta]}$, then
\[
J_{\pi} ([\pi(g) \eta]) (X) = \frac{1}{i} \frac{\langle \pi(g) \eta,  d \pi (X) \pi(g) \eta \rangle}{\langle \pi(g) \eta , \pi (g) \eta \rangle } = \frac{1}{i} \frac{\langle  \eta,  d \pi (X)  \eta \rangle}{\langle \eta, \eta \rangle} = J_{\pi} ([\eta]) (X), \quad X \in \mathfrak{g},
\]
so that by the $G$-equivariance of $J_{\pi}$ it follows that $\capad^* (g) J_{\pi} ([\eta]) = J_{\pi} ([\eta])$. This means that $g \in G(J_{\pi} ([\eta]))$,
and it remains to show that $G(J_{\pi} ([\eta])) \subseteq \pker(\pi)$.

Since $\pi \in \widehat{G}$ is square-integrable modulo $\pker(\pi)$,
it is also square-integrable modulo $\ker(\pi)$, see, e.g. \cite[Corollary 2.1]{bekka1990complemented}.
It follows therefore by \cite[Theorem 4.5.2]{corwin1990representations} and \cite[Theorem 3.2.3]{corwin1990representations} that $\mathcal{O}_{\pi}$ is a linear variety of the form $\mathcal{O}_{\pi}  = \ell + \mathfrak{k}^{\perp}$ for $\ell \in \mathcal{O}_{\pi}$, with $\mathfrak{k}$ being the Lie algebra of $\pker(\pi)$. In addition, \cite[Theorem 3.2.3]{corwin1990representations} yields that
$\mathfrak{g} (\ell) = \mathfrak{k}$ for $\ell \in \mathcal{O}_{\pi}$, so that $G (\ell) = \pker(\pi)$ for $\ell \in \mathcal{O}_{\pi}$. By \cite[Theorem 4.2]{wildberger1989convexity} (see also Equation \eqref{eq:momentset}) it follows, in particular, that
\[
 J_{\pi} ([\eta]) \in J_{\pi} (\PHs) \subseteq I_{\pi} = \overline{\conv}(\mathcal{O}_{\pi}) = \mathcal{O}_{\pi},
\]
where $I_{\pi} = \overline{J_{\pi} (\PHs)}$ denotes the moment set of $\pi$. Therefore, $G(J_{\pi}([\eta])) = \pker(\pi)$.

Lastly, since $G_{[\eta]} = \pker(\pi) = G(J_{\pi} ([\eta]))$ by the above, the orbit $G \cdot [\eta]$ is symplectic by Lemma \ref{lem:symplectic_orbit}.
\end{proof}

\begin{proof}[Proof of Theorem \ref{thm:intro2}]
If  $G \cdot [\eta]$ is symplectic, then $G_{[\eta]}$ is connected by Proposition \ref{prop:connected_isotropy}. Therefore, if $\eta$ determines a $\pi$-system of coherent states based on $G/G_{[\eta]}$, the conclusions of Theorem \ref{thm:intro2} follow directly from Theorem \ref{thm:subsystems_states}.
\end{proof}

\subsection{Complex projective orbits} \label{sec:complex_orbit}
In \cite{lisiecki1990kaehler, neeb1996coherent}, an orbit $G \cdot [\eta] = \{ [\pi(g) \eta] : g \in G \}$ is called \emph{complex} if $[\eta] \in \PHs$ and $G \cdot [\eta]$ is a complex submanifold of $\PH$.

The following lemma characterises complex orbits in terms of the stabiliser of the complexified Lie algebra; see \cite[Proposition 2.8]{lisiecki1990kaehler} and \cite[Lemma XV.2.3]{neeb1999holomorphy}.

\begin{lemma}[\cite{lisiecki1990kaehler}] \label{lem:complex_orbit}
Let $\mathfrak{s} = (\mathfrak{g})_{\mathbb{C}}$. For $[\eta] \in \PHs$, let $\mathfrak{s}_{[\eta]} := \{ X \in \mathfrak{s} : d\pi(X) \eta \in \mathbb{C} \eta \}$.

The following assertions are equivalent:
\begin{enumerate}[(i)]
 \item The orbit $G \cdot [\eta]$ is complex;
 \item $\mathfrak{s}_{[\eta]} + \overline{\mathfrak{s}_{[\eta]}} = \mathfrak{s}$.
\end{enumerate}

\end{lemma}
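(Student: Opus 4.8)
The plan is to establish the equivalence of (i) and (ii) by realising both conditions through a single complex-linear map and then reducing to elementary linear algebra. Throughout, the key object is the real tangent space of the orbit at the base point, which I would identify explicitly inside $\PH$.

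First I would fix the geometric set-up. Since $[\eta]\in\PHs$, the tangent space $T_{[\eta]}\PH$ is naturally a complex vector space, identified with the quotient $\Hpi/\mathbb{C}\eta$ (equivalently with $\eta^{\perp}$ for a unit vector), its complex structure $J$ being multiplication by $i$. Differentiating the curves $t\mapsto[\pi(\exp(tX))\eta]$ at $t=0$ shows that the real tangent space of $G\cdot[\eta]$ at the base point is
\[
V:=\{[d\pi(X)\eta]:X\in\mathfrak{g}\}\subseteq\Hpi/\mathbb{C}\eta,
\]
where $[v]$ denotes the class of $v$ modulo $\mathbb{C}\eta$. By definition, $G\cdot[\eta]$ is a complex submanifold precisely when $V$ is $J$-invariant, i.e. when $iV=V$; this recasts (i) as a statement about $V$.

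Next I would bring in the complexification. Extending $\Phi_0:\mathfrak{g}\to\Hpi/\mathbb{C}\eta$, $X\mapsto[d\pi(X)\eta]$, complex-linearly to $\Phi:\mathfrak{s}\to\Hpi/\mathbb{C}\eta$, $Z\mapsto[d\pi(Z)\eta]$, one has $\ker\Phi=\mathfrak{s}_{[\eta]}$ by definition, and writing $Z=X+iY$ with $X,Y\in\mathfrak{g}$ gives $\Phi(Z)=\Phi_0(X)+i\Phi_0(Y)$; hence the image of $\Phi$ is the complex span $V+iV$. From the conjugation $\overline{X+iY}=X-iY$ on $\mathfrak{s}$ one reads off $\overline{\mathfrak{s}_{[\eta]}}=\{X+iY:\Phi_0(X)=i\Phi_0(Y)\}$. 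The crux is then the computation
\[
\Phi\big(\mathfrak{s}_{[\eta]}+\overline{\mathfrak{s}_{[\eta]}}\big)=\Phi\big(\overline{\mathfrak{s}_{[\eta]}}\big)=V\cap iV.
\]
Indeed, for $Z=X+iY\in\overline{\mathfrak{s}_{[\eta]}}$ the constraint $\Phi_0(X)=i\Phi_0(Y)$ yields $\Phi(Z)=2i\Phi_0(Y)$, and this constraint is solvable for $X\in\mathfrak{g}$ exactly when $i\Phi_0(Y)\in V$, that is, when $\Phi_0(Y)\in V\cap iV$; as $Y$ varies, $\Phi_0(Y)$ ranges over all of $V\cap iV$, and since $V\cap iV$ is the maximal complex subspace of $V$, multiplication by $2i$ returns it to itself. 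Granting this, the equivalence is immediate: because $\mathfrak{s}_{[\eta]}+\overline{\mathfrak{s}_{[\eta]}}\supseteq\ker\Phi$, the condition $\mathfrak{s}_{[\eta]}+\overline{\mathfrak{s}_{[\eta]}}=\mathfrak{s}$ holds if and only if $\Phi(\mathfrak{s}_{[\eta]}+\overline{\mathfrak{s}_{[\eta]}})=\Phi(\mathfrak{s})$, i.e. $V\cap iV=V+iV$, which forces $V=iV$, and conversely $V=iV$ gives $V\cap iV=V+iV$. Thus (ii) holds exactly when $V=iV$, which is (i).

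I expect the main obstacle to lie not in the algebra but in the geometric set-up: justifying that $G\cdot[\eta]$ carries a manifold structure with tangent space $V$ at $[\eta]$, and that the complex structure induced on $T_{[\eta]}\PH$ is genuinely multiplication by $i$, within the infinite-dimensional projective space $\PH$ and for smooth vectors $[\eta]\in\PHs$. These differential-geometric technicalities are precisely what is supplied by \cite{lisiecki1990kaehler, neeb1999holomorphy}; once they are in place, the equivalence reduces to the elementary linear algebra above.
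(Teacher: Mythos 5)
The paper offers no proof of this lemma; it is quoted verbatim from \cite{lisiecki1990kaehler} (Proposition 2.8) and \cite{neeb1999holomorphy} (Lemma XV.2.3), so there is no in-paper argument to compare against. Your reduction is essentially the standard proof from those sources, and the linear algebra at its core is correct: $\ker\Phi=\mathfrak{s}_{[\eta]}$, $\Phi(\mathfrak{s})=V+iV$, $\Phi\bigl(\overline{\mathfrak{s}_{[\eta]}}\bigr)=V\cap iV$, and since $\mathfrak{s}_{[\eta]}+\overline{\mathfrak{s}_{[\eta]}}\supseteq\ker\Phi$, condition (ii) is equivalent to $V\cap iV=V+iV$, i.e. to $V=iV$. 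One point you gloss over with ``by definition'': the equivalence of (i) with $J$-invariance of $V$ at the single base point needs (a) that $G\cdot[\eta]\cong G/G_{[\eta]}$ is an (immersed) submanifold with tangent space $V$ at $[\eta]$, (b) that a submanifold of a complex manifold whose tangent spaces are all $J$-invariant is a complex submanifold, and (c) that $J$-invariance at $[\eta]$ propagates to the whole orbit because projective unitaries act biholomorphically on $\PH$. You correctly identify (a) as the technical content supplied by the references, but (b) and (c) deserve explicit mention; with those in place the argument is complete.
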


A stabiliser $\mathfrak{s}_{[\eta]}$ satisfying part (ii) of Lemma \ref{lem:complex_orbit} is called \emph{maximal} in \cite[Section 2.4]{perelomov1986generalized}, where it is part of a principle for selecting coherent states that minimise the uncertainty principle.
Such vectors and associated orbits play an important role in Berezin's quantization, see \cite[Section 16]{perelomov1986generalized}. In addition, vectors of this type are intimately related to highest weight modules and representations (cf. \cite{neeb1996coherent, neeb1999holomorphy}) and are also referred to as \emph{highest weight vectors}.

\begin{remark}  \label{rem:complex_orbit}
By \cite[Proposition 2.8]{lisiecki1990kaehler}, any complex orbit is automatically symplectic in the sense of Section \ref{sec:symplectic_orbit}. Theorem \ref{thm:intro2} applies therefore to highest weight vectors.
\end{remark}

\begin{remark}
The significance of a complex orbit $G \cdot [\eta]$ is that the quotient manifold $G / G_{[\eta]}$ admits a complex structure (cf. \cite[Section XV.2]{neeb1999holomorphy}). In turn, for certain (classes of) representations admitting highest weight vectors, the representation space may be realised as a space of holomorphic functions (see \cite[Section 2.4]{perelomov1986generalized} and \cite{rossi1973representations}); in particular, see \cite[Section 5]{lisiecki1995coherent} for complex orbits for the Heisenberg group. However, for nilpotent Lie groups, the existence of complex orbits appears to be restrictive, i.e. \cite[Theorem 1]{lisiecki1991classification} asserts that the only irreducible representations with a discrete kernel admitting complex orbits are those of Heisenberg groups. In contrast, symplectic orbits do exist for all groups admitting square-integrable representations by Proposition \ref{prop:SI_symplectic}.
\end{remark}

\section*{Acknowledgements}
J.v.V. gratefully acknowledges support from the Research
Foundation - Flanders (FWO) Odysseus 1 grant G.0H94.18N and the Austrian Science Fund (FWF) project J-4445.
Thanks are due to Bas Janssens for helpful discussions and to the anonymous referee for providing helpful comments and suggestions.

\end{document}